\newcommand{\reg}{\operatorname{reg}}
\newcommand{\iv}{\operatorname{iv}}
\newcommand{\degr}{\operatorname{deg}}
\newcommand{\tor}{\operatorname{Tor}}
\newcommand{\leaves}{\operatorname{L}}
\newcommand{\J}{\mathcal{J}}
\newtheorem{theorem}{Theorem}[section]
\newtheorem{corollary}[theorem]{Corollary}
\newtheorem{example}[theorem]{Example}
\newtheorem{fact}[theorem]{Fact}
\begin{document}

\title{Regularity of Binomial Edge Ideals of Certain Block Graphs}
\author{A. V. Jayanthan}
\address{Department of Mathematics, Indian Institute of Technology
Madras, Chennai, INDIA - 600036.}
\email{jayanav@iitm.ac.in}
\author{N. Narayanan}
\address{Department of Mathematics, Indian Institute of Technology
Madras, Chennai, INDIA - 600036.}
\email{naru@iitm.ac.in}
\author{B. V. Raghavendra Rao}
\address{Department of Computer Science and Engineering, Indian
Institute of Technology Madras, Chennai, INDIA - 600036.}
\email{bvrr@iitm.ac.in}

\maketitle

\begin{abstract}
We prove that the regularity of binomial edge ideals of graphs
obtained by gluing two graphs at a free vertex is the sum of the
regularity of individual graphs. As a consequence, we generalize
certain results of Zafar and Zahid.  We obtain an improved lower bound
for the regularity of trees. Further, we characterize trees which
attain the lower bound. We prove an upper bound for the regularity of
certain subclass of block-graphs. As a consequence we obtain sharp
upper and lower bounds for a class of trees called lobsters. 
\end{abstract}

\section{Introduction}
Let $G$ be a simple graph on the vertex set $[n]$. Let $S = K[x_1,
\ldots, x_n,y_1,\ldots,y_n]$ be the polynomial ring in $2n$
variables, where $K$ is a field. Then the ideal $J_G$ generated by
$\{x_iy_j-x_jy_i \mid (i,j) \mbox{ is an edge in } G\}$ is called
the binomial edge ideal of $G$. This was introduced by Herzog et
al., \cite{hhhkr10} and independently by Ohtani, \cite{ohtani11}.
Recently, there have been many results relating the combinatorial data
of graphs with the algebraic properties of the corresponding binomial
edge ideals, see \cite{cdi14}, \cite{cr2011}, \cite{ehh11},
\cite{mm2013}, \cite{mk2012}, \cite{mk2013},  \cite{zz13}. In
particular, there have been active research connecting algebraic
invariants of the binomial edge ideals such as Castelnuovo-Mumford
regularity, depth, Betti numbers etc., with combinatorial invariants
associated with graphs such as length of maximal induced path,
number of maximal cliques, matching number. For example, Matsuda and
Murai proved that $\ell \leq \reg(S/J_G) \leq n-1$, where $\ell$ is
the length of the longest induced path in $G$, \cite{mm2013}. They
conjectured that if $\reg(S/J_G) = n-1$, then $G$ is a path of length
$n$. In \cite{km16}, Kiani and Saeedi Madani proved the conjecture.
Chaudhry et al. proved that if $T$ is a tree whose longest induced
path has length $\ell$, then $\reg(S/J_T) = \ell$ if and only if $T$
is a caterpillar, \cite{cdi14}. Therefore, the trees that attain the
minimal or maximal regularity have been characterized. However, for
most of the graph classes, the Matsuda-Murai bounds are far from being
tight. Saeedi Madani and Kiani, \cite{mk2012}, proved that if $G$ is a
closed graph, then $\reg(S/J_G) \leq c(G)$, where $c(G)$ is the number
of maximal cliques in $G$. Here, a graph is said to be closed if its
binomial edge ideal has a quadratic Gr\"obner basis.  They generalized
this result to the case of binomial edge ideal of a pair of a closed
graph and a complete graph, and proposed conjectured that for any
graph $G$, $\reg(S/J_G) \leq c(G)$, \cite{mk2013}. In
\cite{mk-arxiv-13}, they proved the conjecture for generalized block
graph. In \cite{ez2015}, Ene and Zarojanu proved that if $G$ is a
chordal graph with the property that any two distinct maximal cliques
intersect in at most one vertex, then $\reg(S/J_G) \leq c(G)$. 

Though the bound obtained for block graphs by Madani and Kiani is
sharp, there are several subclasses of block graphs, including trees,
where the upper bound is more than the actual regularity (for example,
caterpillar, \cite{cdi14}). In this article, we study the regularity
of binomial edge ideals of certain classes of block graphs, and in
particular trees.

In \cite{rr14}, Rauf and Rinaldo studied binomial edge ideals of
graphs obtained by gluing two graphs at free vertices. We extend their
arguments to observe that the regularity of the binomial edge ideal of
a graph obtained by gluing two graphs at free vertices is equal to
the sum of the regularities the binomial edge ideals of the
individual graphs, Theorem \ref{glue}. As a consequence, we obtain
precise expressions for the regularities of several classes of trees and
block graphs, (Corollaries \ref{reg-sum}, \ref{block-sum} and
\ref{g3t3}).

The lower bound for the regularity of a binomial edge ideal given by
Matsuda and Murai, namely, the length of the longest induced path,
\cite{mm2013}, is the best lower bound known as of now.  By using
inductive application of Theorem \ref{glue}, we obtain a lower bound
for the regularity of binomial edge ideals of trees in terms of the
number of internal vertices, Theorem \ref{tree-regularity}. We
characterize trees which attain the lower bound in terms of presence
of a specific tree as a subgraph, Theorem \ref{minreg}.

We then move on to study certain subclasses of block graphs and obtain
improved upper bounds for their regularity, Theorems \ref{gmnw} and
\ref{clique-whisk}. As a consequence
we get an upper bound for the
regularity of the binomial edge ideals of lobsters (see Section 2 for
definition), Corollary \ref{cor:lobsternowhisker}. 
We also obtain a precise
expression for the regularity of binomial edge ideals of a subclass of
lobsters, called pure lobsters, in Corollary \ref{purelobster}. 

\vskip 2mm

\section{Preliminaries} 

In this section, we set up the basic definitions and notation.

Let $G$ be a finite simple graph. A vertex $x$ of $G$ is said to be
a cut vertex if $G \setminus \{x\}$ has strictly more connected
components than $G$. A block of $G$ is a maximal subgraph
without a cut vertex. A graph $G$ is a \textit{block graph} if every
block of $G$ is a complete graph.

Let $T$ be a tree and $\leaves(T)=\{v\in V(T)|\degr(v)=1\}$ be the set of all
leaves of $T$. We say that a tree $T$ is a {\em caterpillar} if $T \setminus
\leaves(T)$ is either empty or is a simple path. 
%A longest induced path in a
%caterpillar $T$ is called a {\em spine} of $T$.  
Similarly, a tree  $T$ is said to be a {\em lobster}, if $T \setminus
\leaves(T)$  is a caterpillar, \cite{g1994}. Observe that every caterpillar is also
a lobster.  A longest path in a lobster is called a {\em spine} of the
lobster. Note that  given any spine, every edge of a
caterpillar is incident to it.  With respect to a fixed spine $P$, the
pendant edges incident with $P$ are called {\em whiskers}. It can be
seen that every non-leaf vertex $u$ not incident on a fixed spine
$P$ of a lobster forms the center of a star ($K_{1,m}, m \ge 2$). 
Each such star is said to be a  {\em limb} with respect to $P$. 
More generally, given a vertex $v$ on any simple path $P$, we can
attach a star  $(K_{1,m}, m\ge 2)$  with center $u$  by identifying
exactly one of the leaves of the star with  $v$. Such a star is called
{\em  a limb attached to} $P$. 

Note that the limbs and whiskers depend on the spine. Whenever a spine
is fixed, we will refer to them simply as limb and whisker.

%A \textbf{whisker (leg?)} with respect to a spine $P$ in a tree $T$ is a pendant edge i 
%
%  \item A \textbf{limb} with respect to a spine $P$ is a complete
%	bipartite graph $K(1,m), ~m \geq 2,$ attached to the spine $P$
%	through one of its leaves.

\begin{example} Let $G$ denote the given graph on $10$ vertices:

\begin{minipage}{\linewidth}
  %\centering
\begin{minipage}{0.2\linewidth}
\captionsetup[figure]{labelformat=empty}
\begin{figure}[H]

  \begin{tikzpicture}[scale=0.5]
  \tikzset{VertexStyle/.style = {shape = circle, inner sep = 1pt, outer sep = 0pt, minimum size = 0 pt, scale=0.7}}
\useasboundingbox (0,0) rectangle (5.0cm,5.0cm);
\definecolor{cv0}{rgb}{0.0,0.0,0.0}
\definecolor{cfv0}{rgb}{1.0,1.0,1.0}
\definecolor{clv0}{rgb}{0.0,0.0,0.0}
\definecolor{cv1}{rgb}{0.0,0.0,0.0}
\definecolor{cfv1}{rgb}{1.0,1.0,1.0}
\definecolor{clv1}{rgb}{0.0,0.0,0.0}
\definecolor{cv2}{rgb}{0.0,0.0,0.0}
\definecolor{cfv2}{rgb}{1.0,1.0,1.0}
\definecolor{clv2}{rgb}{0.0,0.0,0.0}
\definecolor{cv3}{rgb}{0.0,0.0,0.0}
\definecolor{cfv3}{rgb}{1.0,1.0,1.0}
\definecolor{clv3}{rgb}{0.0,0.0,0.0}
\definecolor{cv4}{rgb}{0.0,0.0,0.0}
\definecolor{cfv4}{rgb}{1.0,1.0,1.0}
\definecolor{clv4}{rgb}{0.0,0.0,0.0}
\definecolor{cv5}{rgb}{0.0,0.0,0.0}
\definecolor{cfv5}{rgb}{1.0,1.0,1.0}
\definecolor{clv5}{rgb}{0.0,0.0,0.0}
\definecolor{cv6}{rgb}{0.0,0.0,0.0}
\definecolor{cfv6}{rgb}{1.0,1.0,1.0}
\definecolor{clv6}{rgb}{0.0,0.0,0.0}
\definecolor{cv7}{rgb}{0.0,0.0,0.0}
\definecolor{cfv7}{rgb}{1.0,1.0,1.0}
\definecolor{clv7}{rgb}{0.0,0.0,0.0}
\definecolor{cv8}{rgb}{0.0,0.0,0.0}
\definecolor{cfv8}{rgb}{1.0,1.0,1.0}
\definecolor{clv8}{rgb}{0.0,0.0,0.0}
\definecolor{cv9}{rgb}{0.0,0.0,0.0}
\definecolor{cfv9}{rgb}{1.0,1.0,1.0}
\definecolor{clv9}{rgb}{0.0,0.0,0.0}
\definecolor{cv0v1}{rgb}{0.0,0.0,0.0}
\definecolor{cv1v2}{rgb}{0.0,0.0,0.0}
\definecolor{cv1v5}{rgb}{0.0,0.0,0.0}
\definecolor{cv2v3}{rgb}{0.0,0.0,0.0}
\definecolor{cv2v6}{rgb}{0.0,0.0,0.0}
\definecolor{cv3v4}{rgb}{0.0,0.0,0.0}
\definecolor{cv6v7}{rgb}{0.0,0.0,0.0}
\definecolor{cv6v8}{rgb}{0.0,0.0,0.0}
\definecolor{cv6v9}{rgb}{0.0,0.0,0.0}
\Vertex[style={minimum size=1.0cm,draw=cv0,fill=cfv0,text=clv0,shape=circle},LabelOut=false,L=\hbox{$0$},x=5.0cm,y=3.9748cm]{v0}
\Vertex[style={minimum size=1.0cm,draw=cv1,fill=cfv1,text=clv1,shape=circle},LabelOut=false,L=\hbox{$1$},x=3.6851cm,y=3.9089cm]{v1}
\Vertex[style={minimum size=1.0cm,draw=cv2,fill=cfv2,text=clv2,shape=circle},LabelOut=false,L=\hbox{$2$},x=2.313cm,y=2.9586cm]{v2}
\Vertex[style={minimum size=1.0cm,draw=cv3,fill=cfv3,text=clv3,shape=circle},LabelOut=false,L=\hbox{$3$},x=1.028cm,y=3.8972cm]{v3}
\Vertex[style={minimum size=1.0cm,draw=cv4,fill=cfv4,text=clv4,shape=circle},LabelOut=false,L=\hbox{$4$},x=0.0cm,y=4.7034cm]{v4}
\Vertex[style={minimum size=1.0cm,draw=cv5,fill=cfv5,text=clv5,shape=circle},LabelOut=false,L=\hbox{$5$},x=4.096cm,y=5.0cm]{v5}
\Vertex[style={minimum size=1.0cm,draw=cv6,fill=cfv6,text=clv6,shape=circle},LabelOut=false,L=\hbox{$6$},x=2.4251cm,y=1.2666cm]{v6}
\Vertex[style={minimum size=1.0cm,draw=cv7,fill=cfv7,text=clv7,shape=circle},LabelOut=false,L=\hbox{$7$},x=2.5622cm,y=0.0cm]{v7}
\Vertex[style={minimum size=1.0cm,draw=cv8,fill=cfv8,text=clv8,shape=circle},LabelOut=false,L=\hbox{$8$},x=3.6902cm,y=0.7971cm]{v8}
\Vertex[style={minimum size=1.0cm,draw=cv9,fill=cfv9,text=clv9,shape=circle},LabelOut=false,L=\hbox{$9$},x=1.2685cm,y=0.6822cm]{v9}
\Edge[lw=0.03cm,style={color=cv0v1,},](v0)(v1)
\Edge[lw=0.03cm,style={color=cv1v2,},](v1)(v2)
\Edge[lw=0.03cm,style={color=cv1v5,},](v1)(v5)
\Edge[lw=0.03cm,style={color=cv2v3,},](v2)(v3)
\Edge[lw=0.03cm,style={color=cv2v6,},](v2)(v6)
\Edge[lw=0.03cm,style={color=cv3v4,},](v3)(v4)
\Edge[lw=0.03cm,style={color=cv6v7,},](v6)(v7)
\Edge[lw=0.03cm,style={color=cv6v8,},](v6)(v8)
\Edge[lw=0.03cm,style={color=cv6v9,},](v6)(v9)
\end{tikzpicture}

%
%\tikzstyle{every node}=[circle, draw, fill=black!100,
%                        inner sep=0pt, minimum width=1pt]
%\begin{tikzpicture}[scale=0.5]
%%
%\GraphInit[vstyle=Classic]
%%
%  \tikzset{VertexStyle/.style = {shape = circle, inner sep = 2pt, outer sep = 0pt, minimum size = 1 pt}}
%\useasboundingbox (0,0) rectangle (5.0cm,4.0cm);
%%
%\Vertex[L=\hbox{$0$},x=5.0cm,y=3.0631cm]{v0}
%\Vertex[L=\hbox{$1$},x=3.962cm,y=2.4604cm]{v1}
%\Vertex[L=\hbox{$2$},x=2.394cm,y=2.5944cm]{v2}
%\Vertex[L=\hbox{$3$},x=2.1401cm,y=1.1934cm]{v3}
%\Vertex[L=\hbox{$4$},x=1.8856cm,y=0.0cm]{v4}
%\Vertex[L=\hbox{$5$},x=4.8402cm,y=1.6374cm]{v5}
%\Vertex[L=\hbox{$6$},x=1.0659cm,y=3.6918cm]{v6}
%\Vertex[L=\hbox{$7$},x=1.1647cm,y=5.0cm]{v7}
%\Vertex[L=\hbox{$8$},x=0.0cm,y=4.3427cm]{v8}
%\Vertex[L=\hbox{$9$},x=0.0912cm,y=3.0989cm]{v9}
%%
%\Edge[](v0)(v1)
%\Edge[](v1)(v2)
%\Edge[](v1)(v5)
%\Edge[](v2)(v3)
%\Edge[](v2)(v6)
%\Edge[](v3)(v4)
%\Edge[](v6)(v7)
%\Edge[](v6)(v8)
%\Edge[](v6)(v9)
%%
%\end{tikzpicture}
\caption{$G$}
\end{figure}
\end{minipage}
\begin{minipage}{0.75\linewidth}
In this example, $G$ has many longest induced paths. The path
induced by the vertices $\{0,1,2,3,4\}, ~ \{0,1,2,6,9\}$ are two
such (there are more) paths. Let $P$ denote the path induced by the
vertices $\{0,1,2,3,4\}$. Then $(1,5)$ is a whisker with respect to
$P$. Also the subgraph induced by the vertices $\{2,6,7,8,9\}$ is a
limb with respect to $P$. If we consider $\{0,1,2,6,9\}$ as spine $P$,
then $\{(1,5), (6,7),(6,8)\}$ are whiskers with respect to $P$ and the
path induced by $\{2,3,4\}$ is a limb.
\end{minipage}
\end{minipage}
\end{example}

We now describe the construction of a useful exact sequence introduced
by Ene, Herzog and Hibi, \cite{ehh11}.
\subsection{Ene-Herzog-Hibi Process}\label{ehh-process}
Let $G$ be a block graph, $\Delta(G)$ be the clique complex of $G$ and
$F_1, \ldots, F_r$ be a leaf order on the facets of $\Delta(G)$.
Assume that $r > 1$. Let $v \in V(G)$ be the unique vertex in $F_r$
such that $F_r \cap F_j \subseteq \{v\}$ for all $j < r$. Let $G'$ be the graph
obtained by adding necessary edges to $G$ so that $N(v) \cup \{v\}$ is
a clique. Let $G''$ be the graph induced on $G \setminus \{v\}$ and
$H$ be the graph induced on $G' \setminus \{v\}$. Then there exists an
exact sequence
\begin{eqnarray}\label{ehh-es}
  0 \to S/J_G \to S/J_{G'} \oplus S/J_{G''} \to S/J_H \to 0.
\end{eqnarray}
We call $G', ~G''$ and $H$ to be the graphs obtained by applying
EHH-process on $G$ with respect to $v$. This exact sequence has been
found extremely useful in inductive arguments in the study of
homological properties of the binomial edge ideals.

\section{Regularity via gluing}
In this section, we describe the process of gluing and use it to
obtain precise regularity expressions for certain classes of graphs.
Let $G$ be a graph and $v$ be a cut vertex in $G$. Let
$G_1,\ldots,G_k$ be the components of $G \setminus \{v\}$ and $G_i' =
G[V(G_i)\cup\{v\}]$,
the subgraph of $G$ induced by $V(G_i)\cup\{v\}$.
Then, $G_1', \ldots, G_k'$ is called the {\em split} of $G$ at $v$ and  we say that $G$ is obtained by
{\em gluing} $G_1, \ldots, G_k$ at $v$.

\begin{theorem}\label{glue}

Let $G_1$ and $G_2$ be the split of a graph $G$ at $v$. If  $v$ is a free vertex in
both $G_1$ and $G_2$, then 
\[\reg(S/J_G) = \reg(S/J_{G_1}) + \reg(S/J_{G_2}).\]
\end{theorem}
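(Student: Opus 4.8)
The plan is to reduce the gluing statement to the work already done in \cite{rr14}, where Rauf and Rinaldo studied exactly this construction, and then upgrade their regularity inequality to an equality by a short exact sequence argument. First I would recall the structure of $J_G$ when $v$ is a free vertex in both pieces. Writing $G_1$ on $[n_1]$ and $G_2$ on a disjoint vertex set glued along $v$, the generators of $J_G$ split as the generators of $J_{G_1}$ together with those of $J_{G_2}$, the only shared variables being $x_v, y_v$. The key algebraic input, which I would take from \cite{rr14} (and which ultimately rests on $v$ being a free vertex, so that the relevant binomials behave like regular elements in the sense already used in Corollary \ref{regularity-edge-leaf}), is a description of a minimal free resolution of $S/J_G$ as a tensor-product-type construction of the resolutions of $S/J_{G_1}$ and $S/J_{G_2}$. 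Concretely, I expect the Betti numbers to multiply in the form
\[
\beta_{i,i+j}(S/J_G) \;=\; \sum_{\substack{i_1+i_2=i\\ j_1+j_2=j}} \beta_{i_1,i_1+j_1}(S/J_{G_1})\,\beta_{i_2,i_2+j_2}(S/J_{G_2}),
\]
which is precisely the Betti-number behaviour of a tensor product of two minimal free resolutions over the respective polynomial rings.

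Granting the Betti-number formula above, the regularity statement is immediate and purely formal. Recall $\reg(S/J_{G_k}) = \max\{\, j : \beta_{i,i+j}(S/J_{G_k}) \neq 0 \text{ for some } i \,\}$. Set $a_k = \reg(S/J_{G_k})$ for $k=1,2$. On one hand, choosing $j_1,i_1$ with $\beta_{i_1,i_1+a_1}(S/J_{G_1})\neq 0$ and $j_2,i_2$ with $\beta_{i_2,i_2+a_2}(S/J_{G_2})\neq 0$, the corresponding summand in the product formula is a product of nonzero (hence positive) integers, so $\beta_{i_1+i_2,\,(i_1+i_2)+(a_1+a_2)}(S/J_G)\neq 0$, giving $\reg(S/J_G)\ge a_1+a_2$. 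On the other hand, if $\beta_{i,i+j}(S/J_G)\neq 0$ then at least one summand is nonzero, forcing $j=j_1+j_2$ with $j_1\le a_1$ and $j_2\le a_2$, so $j\le a_1+a_2$; hence $\reg(S/J_G)\le a_1+a_2$. Combining the two gives the desired equality.

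The main obstacle is establishing the Betti-number product formula rigorously rather than quoting it, since \cite{rr14} may only state a regularity inequality (or a depth/Betti result in a slightly different normalization). The cleanest route I expect to take is to verify that, because $v$ is free in both $G_1$ and $G_2$, the two ideals are in \emph{suitably transverse position}: after a harmless localization or flat base change merging the two polynomial rings along $x_v,y_v$, a minimal free resolution of $S/J_G$ is obtained as the tensor product $F_\bullet \otimes F'_\bullet$ of minimal resolutions of the factors. The delicate point is minimality of the tensor-product complex, i.e. that no cancellation occurs; this is where freeness of $v$ is really used, and I would argue it by checking that the differential of $F_\bullet\otimes F'_\bullet$ has entries in the maximal ideal (so the complex remains minimal) and that it is acyclic resolving $S/J_G$. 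If isolating this from \cite{rr14} proves awkward, the fallback is an inductive argument peeling off one edge at a time via Corollary \ref{regularity-edge-leaf} and the Mayer--Vietoris-type exact sequence (equation (1) in the proof of Theorem 1.1 of \cite{ehh11}) already invoked in Theorem \ref{thm:pathclique}, tracking both the upper and lower regularity bounds simultaneously to force equality.
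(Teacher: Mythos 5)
Your skeleton is the right one, and it is in fact the mechanism underlying the paper's own proof: additivity of regularity should come from a Betti-number convolution, which in turn comes from a tensor-product description of the resolution, and your formal derivation of $\reg(S/J_G)=a_1+a_2$ from the product formula is correct. The problem is that the entire content of the theorem sits in the step you leave open: acyclicity of $F_\bullet\otimes F'_\bullet$. Saying you would verify it ``by checking \ldots that it is acyclic resolving $S/J_G$'' is circular --- acyclicity of that complex is \emph{equivalent} to the Tor-independence $\tor_i(S/J_{G_1},S/J_{G_2})=0$ for $i\geq 1$, which, after base change along $A=K[x_v,y_v]$ (the diagonal trick, writing $A$ as a quotient of $A\otimes_K A$ by two linear forms), is equivalent to the statement that $x_n-x_{n+m},\,y_n-y_{n+m}$ is a regular sequence on $S'/J_{G'}$, where $G'=G_1\cup G_2$ is the disjoint union on separate vertex sets $\{1,\ldots,n\}$ and $\{n+1,\ldots,n+m\}$. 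That regular-sequence statement is exactly the Claim the paper isolates, and nothing in your proposal supplies an argument for it; the observation that the differential has entries in the maximal ideal only gives minimality, which is the easy half and uses no hypothesis on $v$ at all.

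The paper fills this gap concretely: it writes $S/J_G \cong S'/(J_{G'}+(x_n-x_{n+m},y_n-y_{n+m}))$ and proves the regular-sequence claim using the primary decomposition $J_{G'}=\cap_{T} P_T(G')$ from Theorem 3.2 of \cite{hhhkr10} together with the characterization of minimal primes in \cite[Proposition 2.1]{rr14}: since $n$ and $n+m$ are free vertices, they never lie in the cut set $T$ of a minimal prime, so the degree-one part of each minimal $P_T(G')$ involves only variables $x_i,y_i$ with $i\in T$, whence $x_n-x_{n+m}\notin P_T(G')$; moreover every minimal prime of $J_{G'}+(x_n-x_{n+m})$ has the form $P_T(G')+(x_n-x_{n+m})$, which likewise cannot contain $y_n-y_{n+m}$. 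Neither of your escape routes substitutes for this. Quoting \cite{rr14} wholesale is delicate precisely for the reason you flag (and the paper states it is giving a simpler proof of that claim, not merely citing it). Your fallback induction via Corollary \ref{regularity-edge-leaf} and the exact sequence of \cite{ehh11} cannot work as stated: $G_2$ is an arbitrary graph with a free vertex, not one assembled by attaching pendant edges, and that exact sequence produces upper bounds on regularity, so it cannot yield the lower bound $\reg(S/J_G)\geq a_1+a_2$ needed for equality (induced-subgraph monotonicity only gives the maximum of $a_1,a_2$, not their sum). So the missing idea, concretely, is the proof that freeness of $v$ keeps the two linear forms out of every minimal prime, via the primary decomposition of binomial edge ideals.
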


\begin{proof}
Let $G_1$ and $G_2$ be graphs on the vertices $\{1,\ldots, n\}$ and
$\{n+1,\ldots, n+m\}$ respectively. Assume that $n$ is a free vertex
in $G_1$ and $n+m$ is a free vertex in $G_2$. Let $G$ be the graph
obtained by identifying vertices $n$ and $n+m$ in $G_1 \cup G_2$, i.e., $v=n=n+m$.
Let $G' = G_1 \cup G_2$ and $S' = K[x_1,\ldots,x_{n+m}, y_1,\ldots,
y_{n+m}]$. %Let $\{1,\ldots,n+m-1\}$ denote the vertex set of $G$. 
Then
it can be easily seen that $S/J_G \cong S'/(J_{G'}+(x_n-x_{n+m},
y_n-y_{n+m}))$. From the proof of Theorem 2.7 in \cite{rr14}, it
follows that $(x_n-x_{n+m}, y_n-y_{n+m})$ is a regular sequence on
$S'/J_{G'}$. Hence the assertion follows.
\end{proof}

As an immediate consequence, we have the following:

\vskip 2mm\noindent
\begin{corollary}\label{reg-sum}
Let $G = G_1 \cup \cdots \cup G_k$ be such that
	\begin{enumerate}
	  \item for $i\neq j,$ if $G_i \cap G_j \neq \emptyset$, then $G_i \cap G_j =
		\{v_{ij}\},$ for some vertex $v_{ij}$ which is a free vertex in $G_i$ as
		well as $G_j$;
	  \item for distinct $i, j, k$, $G_i \cap G_j \cap G_k =
		\emptyset$.
	\end{enumerate}
Then $\reg S/J_G = \sum_{i=1}^k \reg S/J_{G_i}.$
\end{corollary}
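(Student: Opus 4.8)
The plan is to induct on $k$, peeling off one block at a time by means of Theorem \ref{glue}. The base case $k=1$ is vacuous, and $k=2$ is already covered: it is exactly Theorem \ref{glue} when $G_1\cap G_2\neq\emptyset$, while if $G_1\cap G_2=\emptyset$ the two graphs live on disjoint sets of variables, so $J_G=J_{G_1}+J_{G_2}$ is a sum of ideals in independent variables and regularity is additive. Before running the inductive step I would reduce to the case that $G$ is connected: if $G$ decomposes into connected components, then $J_G$ is again a sum of binomial edge ideals in disjoint sets of variables, its regularity is the sum of the regularities over the components, and so it suffices to treat each connected component together with the sub-family of $G_i$ contained in it.

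Assume now that $G$ is connected and $k\geq 2$. The heart of the argument is to produce, after relabelling, a block $G_k$ meeting $G'':=\bigcup_{j\neq k}G_j$ in a single vertex $v$. This is the point at which conditions (1) and (2) must be exploited: by (2) every shared vertex lies in exactly two of the blocks, and by (1) it is free in each of them, so the blocks glue to one another along free cut vertices in a tree-like fashion. One then reads off a ``leaf'' of the resulting block-incidence structure, i.e.\ a block $G_k$ sharing a vertex with a unique other block $G_j$ and hence attached to the remainder of $G$ solely at $v=v_{jk}$.

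Given such a $G_k$, I would verify the hypotheses of Theorem \ref{glue} for the split $G_k,\,G''$ of $G$ at $v$. The vertex $v$ is free in $G_k$ by (1). Moreover, since $G_k$ meets the rest of $G$ only at $v$, no block other than $G_j$ contains $v$, so the neighbours of $v$ in $G''$ coincide with its neighbours in $G_j$; as $v$ is free in $G_j$ by (1), these form a clique and $v$ is free in $G''$ as well. Theorem \ref{glue} then yields $\reg(S/J_G)=\reg(S/J_{G_k})+\reg(S/J_{G''})$. The sub-family $\{G_j\}_{j\neq k}$ visibly inherits conditions (1) and (2), so the induction hypothesis applied to $G''$ gives $\reg(S/J_{G''})=\sum_{j\neq k}\reg(S/J_{G_j})$, and combining the two equalities produces the claimed formula.

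The main obstacle is precisely the existence of a detachable leaf block $G_k$: everything after that is either a direct appeal to Theorem \ref{glue} or a routine check that freeness and the defining conditions pass to the sub-family. I expect the difficulty to be entirely combinatorial, namely arguing from (1) and (2) that the gluing is acyclic, so that some block attaches to the rest of $G$ along a single cut vertex; once this is in hand, all of the algebraic content is supplied by Theorem \ref{glue}.
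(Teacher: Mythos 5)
Your overall scheme—induct on $k$, peel off a block meeting the rest in a single vertex, and apply Theorem \ref{glue}—is exactly what the paper intends (it states the corollary with no proof, as an ``immediate consequence'' of Theorem \ref{glue}), and everything you do \emph{after} producing a leaf block is fine: the freeness of $v$ in $G_k$ and in $G''$, the application of Theorem \ref{glue}, and the inheritance of hypotheses (1) and (2) by the subfamily are all routine, as you say. The genuine gap is at the step you yourself call the heart of the argument: hypotheses (1) and (2) do \emph{not} force the blocks to glue in a tree-like fashion, and your justification (each shared vertex lies in exactly two blocks and is free in both) is a non sequitur. Concretely, take three triangles $G_1,G_2,G_3$ pairwise glued at three \emph{distinct} vertices $v_{12},v_{13},v_{23}$; the resulting graph $G$ is the $3$-sun. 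Every vertex of a complete graph is free, so (1) holds, and (2) holds trivially, yet the block-incidence structure is a $3$-cycle: each $G_i$ meets the union of the other two in \emph{two} vertices, no $v_{ij}$ is a cut vertex of $G$ (the $3$-sun is $2$-connected), and hence no split as in Theorem \ref{glue} exists anywhere. Your induction cannot even begin on this instance, so the proposal does not prove the corollary as literally stated.

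Two remarks to calibrate the damage. First, the conclusion is not refuted by this example: for the $3$-sun one can check that $\reg(S/J_G)=3=\sum_i\reg(S/J_{G_i})$ (the induced path $z$--$b$--$a$--$x$ gives the lower bound $3$ by Matsuda--Murai, and the exact sequence from \cite{ehh11} used in Theorem \ref{thm:pathclique}, applied at a non-free vertex of the central triangle, gives the matching upper bound)—but this requires an argument beyond gluing. Second, in every use the paper makes of the corollary (Corollaries \ref{block-sum} and \ref{cat-lob-sum}), the $G_i$ arise by splitting a block graph or a tree at cut vertices, so the incidence structure is automatically a forest and your peeling argument is then correct and complete. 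The honest fix is therefore either to add the hypothesis that each $v_{ij}$ is a cut vertex of $G$ (equivalently, that the intersection graph of $G_1,\dots,G_k$ is a forest), under which your proof works verbatim, or to supply a separate argument for the cyclic configurations that the stated hypotheses allow; as written, both your proposal and the paper's own ``immediate consequence'' claim suffer from this same gap.
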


Recall that 
for a (generalized) block graph $G$, $\reg S/J_G \leq c(G)$,
\cite{mk-arxiv-13}. We obtain a subclass of block graphs which attain
this bound.

\begin{corollary}\label{block-sum}
If $G$ is a block graph such that no vertex is contained in more than
two maximal cliques, then $\reg S/J_G = c(G)$. 
\end{corollary}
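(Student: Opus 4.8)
The plan is to exhibit $G$ as a gluing of its maximal cliques and then invoke Corollary \ref{reg-sum}, the only nontrivial input being that the binomial edge ideal of a complete graph has regularity one.

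First I would record the base case. For the complete graph $K_m$ on $m \geq 2$ vertices, $J_{K_m}$ is the ideal of $2 \times 2$ minors of the generic $2 \times m$ matrix with rows $(x_1,\ldots,x_m)$ and $(y_1,\ldots,y_m)$; the quotient admits a linear Eagon--Northcott resolution, so $\reg(S/J_{K_m}) = 1$. Alternatively, $K_m$ is a (generalized) block graph with $c(K_m) = 1$, so $\reg(S/J_{K_m}) \leq 1$ by \cite{mk-arxiv-13}, while $\reg(S/J_{K_m}) \geq 1$ because $J_{K_m} \neq (0)$; hence equality.

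Next I would set up the decomposition. Let $G_1,\ldots,G_k$ be the maximal cliques of $G$, so that $k = c(G)$ and $G = G_1 \cup \cdots \cup G_k$ (I assume, as is natural here, that every maximal clique has at least two vertices, i.e. $G$ has no isolated vertices; size-one cliques would contribute $0$ to the regularity and would have to be excluded from the count). Since $G$ is a block graph, its maximal cliques coincide with its blocks, and any two distinct blocks of a graph meet in at most one vertex, necessarily a cut vertex; thus $|G_i \cap G_j| \leq 1$ for $i \neq j$. Because every vertex of a complete graph is simplicial, and hence free, any shared vertex $v_{ij} \in G_i \cap G_j$ is free both in $G_i$ and in $G_j$. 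This verifies hypothesis (1) of Corollary \ref{reg-sum}. Hypothesis (2), that $G_i \cap G_j \cap G_k = \emptyset$ for distinct $i,j,k$, is precisely the assumption that no vertex belongs to more than two maximal cliques.

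With both hypotheses checked, Corollary \ref{reg-sum} applies and gives
\[
\reg(S/J_G) \;=\; \sum_{i=1}^{k} \reg(S/J_{G_i}) \;=\; \sum_{i=1}^{k} 1 \;=\; k \;=\; c(G),
\]
each summand being $1$ by the base case. I do not expect a genuine obstacle here: Theorem \ref{glue} and Corollary \ref{reg-sum} have already absorbed all the homological work about how regularity behaves under gluing, so the argument reduces to bookkeeping about the block structure. The one point that must be handled with care is the verification that distinct maximal cliques of a block graph intersect in at most a single free vertex and that no vertex is common to three of them---this is exactly where the block-graph hypothesis and the \emph{at most two cliques} hypothesis enter.
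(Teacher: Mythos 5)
Your proof is correct, and it rests on the same two ingredients as the paper's proof: Corollary \ref{reg-sum} together with the fact that $\reg(S/J_{K_m})=1$ for a complete graph. The organization, however, is genuinely different. The paper argues by induction on $c(G)$: it picks a single cut vertex $v$, splits $G$ there into two pieces $G_1,G_2$ (each again a block graph in which no vertex lies in more than two maximal cliques, with $v$ free in both pieces and $c(G)=c(G_1)+c(G_2)$), and applies the two-piece case of Corollary \ref{reg-sum} --- which is exactly Theorem \ref{glue} --- plus the induction hypothesis. You instead apply Corollary \ref{reg-sum} in one shot to the decomposition of $G$ into all $c(G)$ maximal cliques, and your bookkeeping (blocks of a block graph are precisely its maximal cliques, two blocks meet in at most one vertex, every vertex of a complete graph is free, and the ``at most two cliques'' hypothesis rules out triple intersections) is exactly the verification needed; your remark about isolated vertices is a detail the paper ignores. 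One caution about what your one-shot application leans on: the $k$-part statement of Corollary \ref{reg-sum}, read literally, is slightly too generous. For instance, the three edges of a triangle satisfy hypotheses (1) and (2) --- each pairwise intersection is a single vertex, free in both edges, and there is no triple intersection --- yet $\reg(S/J_{K_3})=1\neq 3$. The corollary is really about iterated gluing at cut vertices, i.e.\ it implicitly requires the pieces to meet in a tree-like pattern. For your decomposition this is automatic, since the blocks of any graph intersect according to the block-cut tree, so there is always a ``leaf'' block at which to split off a piece; but this deserves a sentence in your write-up, and it is precisely the point that the paper's cut-vertex induction makes explicit rather than implicit.
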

\begin{proof}
We use induction on $c(G)$. If $c(G) = 1$, then $G$ is a complete
graph and hence $\reg S/J_G = 1$. Now assume that $c(G) > 1$.
Consider any cut vertex $v$
of $G$. Let $G_1$ and $G_2$  be the split of $G$ at $\{v\}$.
Then, $c(G)=c(G_1)+c(G_2)$.
Now the result follows from Corollary \ref{reg-sum} and induction
hypothesis.  
\end{proof}
In \cite{zz13}, Zafar and Zahid considered special classes of graphs
called $\mathcal{G}_3$ and $\mathcal{T}_3$ and obtained the
regularities of the corresponding binomial edge ideals. We generalize their results:
\begin{corollary}\label{g3t3}
\begin{enumerate}
  \item Let $P_1, \ldots, P_s$ be paths of lengths $r_1, \ldots, r_s$
	respectively. Let $G$ be the graph obtained by identifying a leaf
	of $P_i$ with the $i$-th vertex of the complete graph $K_s$. Then $\reg(S/J_G) =
	1+\sum_{i=1}^s r_i$. 

  \item Let $P_1, \ldots, P_k$ be paths of lengths $r_1, \ldots, r_k$
	respectively. Let $G$ be the graph obtained by identifying a leaf
	of $P_i$ with the $i$-th leaf of the star $K_{1,k}$. Then, $\reg(S/J_G) = 2
	+ \sum_{i=1}^k r_i$. 
\end{enumerate}
\end{corollary}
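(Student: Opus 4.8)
The plan is to reduce both statements to the gluing machinery already established, specifically Corollary~\ref{reg-sum}, by exhibiting the graph $G$ in each case as a union of overlapping pieces that meet only at free vertices. The key observation driving both proofs is that a leaf of a path (or the distinguished vertex where a path is attached) is a free vertex, and that the central vertices of a complete graph or the leaves of a star remain free within those respective subgraphs. Once the intersection pattern is verified to satisfy the two hypotheses of Corollary~\ref{reg-sum}, the regularity simply becomes the sum of the regularities of the constituent pieces.

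For part~(1), I would set $G = K_s \cup P_1 \cup \cdots \cup P_s$, where $P_i$ is glued to $K_s$ by identifying a leaf of $P_i$ with the $i$-th vertex of $K_s$. First I would check condition~(i): for $i \neq j$, the paths $P_i$ and $P_j$ share no vertex (their attachment points are distinct vertices of $K_s$), and each $P_i$ meets $K_s$ in exactly the single identified vertex $v_i$; this vertex is a leaf of $P_i$, hence free in $P_i$, and it is a vertex of the complete graph $K_s$, hence free there as well. Condition~(ii), that no three pieces share a common vertex, is immediate since the pairwise intersections are distinct singletons sitting inside $K_s$. Then Corollary~\ref{reg-sum} gives
\[
  \reg(S/J_G) = \reg(S/J_{K_s}) + \sum_{i=1}^s \reg(S/J_{P_i}).
\]
Since $K_s$ is a complete graph, $\reg(S/J_{K_s}) = 1$, and since $P_i$ is a path of length $r_i$ (an induced path), $\reg(S/J_{P_i}) = r_i$ by the Matsuda--Murai result quoted in the introduction (equivalently, a path is a caterpillar whose spine length equals its length, so \cite{cdi14} applies). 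Summing yields $\reg(S/J_G) = 1 + \sum_{i=1}^s r_i$.

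For part~(2), the argument is structurally identical with $K_s$ replaced by the star $K_{1,k}$. I would write $G = K_{1,k} \cup P_1 \cup \cdots \cup P_k$, where $P_i$ is attached at the $i$-th leaf of the star. The same verification shows the pieces overlap only at the distinct leaf-vertices of the star, each of which is free in $K_{1,k}$ (a leaf of any graph is free) and is a leaf of the corresponding path; conditions~(i) and~(ii) of Corollary~\ref{reg-sum} hold. Here $\reg(S/J_{K_{1,k}}) = 2$: the star is a caterpillar (indeed $K_{1,k}\setminus \leaves$ is a single vertex) with spine of length $2$, so \cite{cdi14} applies. Hence
\[
  \reg(S/J_G) = 2 + \sum_{i=1}^k r_i.
\]

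The main obstacle — really the only nontrivial point — is confirming the freeness of the gluing vertices within each subgraph, since Corollary~\ref{reg-sum} demands that each $v_{ij}$ be free in \emph{both} pieces it belongs to. For the path side this is clear because we attach at an endpoint. The subtler side is the hub graph: a vertex of $K_s$ is free precisely because $K_s$ is complete (its neighborhood is a clique), and a leaf of $K_{1,k}$ is trivially free since it has a single neighbor. I expect no genuine difficulty here, so the proof is essentially bookkeeping against the hypotheses of Corollary~\ref{reg-sum} together with the two known base-case regularities $\reg(S/J_{K_s}) = 1$ and $\reg(S/J_{K_{1,k}}) = 2$.
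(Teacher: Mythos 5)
Your proof is correct, and it reaches both formulas by a route that differs from the paper's in an instructive way. The paper's proof is a one-liner: part (1) is a special case of Corollary~\ref{block-sum} (the graph is a block graph in which no vertex lies in more than two maximal cliques, and its maximal cliques are $K_s$ together with the $\sum r_i$ edges of the paths, so $\reg(S/J_G)=c(G)=1+\sum r_i$), while part (2) is a special case of Corollary~\ref{cat-lob-sum} (splitting at the star's leaves, which have degree $2$, yields a star and $k$ paths, all caterpillars, so the regularity is one more than the number of internal vertices, namely $2+\sum r_i$). You instead bypass both of these intermediate corollaries and apply Corollary~\ref{reg-sum} directly, with the coarser decompositions $G=K_s\cup P_1\cup\cdots\cup P_s$ and $G=K_{1,k}\cup P_1\cup\cdots\cup P_k$; this forces you to supply the base regularities $\reg(S/J_{K_s})=1$, $\reg(S/J_{P_i})=r_i$, and $\reg(S/J_{K_{1,k}})=2$ from outside (Matsuda--Murai squeeze for paths, the caterpillar theorem of \cite{cdi14} for the star), whereas the paper's corollaries have these countings built in. Since Corollaries~\ref{block-sum} and~\ref{cat-lob-sum} are themselves proved by induction from Corollary~\ref{reg-sum}, the two arguments run on the same engine (Theorem~\ref{glue}); what yours buys is uniformity --- both parts become the identical ``hub plus paths'' gluing computation --- at the cost of re-verifying facts the paper gets for free from its own machinery. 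Your verification of the hypotheses of Corollary~\ref{reg-sum} (pairwise intersections are single vertices, free on both sides because vertices of a complete graph and leaves are simplicial, and all triple intersections are empty) is exactly the bookkeeping needed, so there is no gap.
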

\begin{proof}
  Both the assertions follow from Theorem \ref{glue}.
\end{proof}

\section{Regularity of Block Graphs}
In this section, we study the regularity of binomial edge ideals of
certain block graphs, and in particular trees. We first obtain a lower bound for the regularity. We then
consider a class of graphs, called lobsters, which are a
generalization of caterpillars. We generalize a result of Chaudhry et
al. to obtain sharp upper bounds for the regularity of binomial edge
ideals of lobsters.
It was shown by Matsuda and Murai, \cite[Corollary 2.3]{mm2013},
that for any graph $G$, $\reg(S/J_G) \geq \ell$, where $\ell$ is the
length of the longest induced path in $G$. Below we prove a much
improved lower bound, for the class of trees.

For a tree $T$, let
$\iv(T):=\#\{\mbox{internal vertices of } T\}$.  Given a tree $T$, it is easy to
see that one can construct $T$ from the trivial graph by adding vertices $v_i$
to $T_{i-1}$ at step $i$ to get  $T_i$   so that $v_i$ is a leaf  in the tree
$T_i$. Any such ordering of vertices is called a {\em leaf ordering}.  

\begin{theorem}\label{tree-regularity}
For a tree $T$, $\reg(S/J_T) \geq \iv(T)+1$.
\end{theorem}

\begin{proof} Let $v_1, \ldots, v_r$ be a leaf ordering of the
vertices of $G$, and let $G_i$ be the subgraph of $G$  induced by $v_1,
\ldots, v_i$. Let $m_i$ denote the number of internal vertices of
$G_i$. We argue by induction on $i$. If $i = 2$, then $G_2$
is an edge and hence $\reg(S/J_{G_2}) = 1$. Therefore, the result holds.
Assume the result for $G_i$. Then $G_{i+1}$ is obtained by adding a
leaf $v_{i+1}$ to some vertex $v$ of $G_i$. If $v$ is a leaf in
$G_i$, then $v$ is a free vertex in $G_i$, and hence by Theorem
\ref{glue},  $\reg(S/J_{G_{i+1}})= \reg(S/J_{G_i})+1$.
Further, $v$ becomes a new internal vertex in $G_{i+1}$, i.e.,
$m_{i+1}=m_i +1$, and therefore the result holds.
If $v$ is an internal vertex in $G_i$, then $m_{i+1}=m_i$ and since
$G_i$ is an induced subgraph of $G_{i+1}$,  $\reg(S/J_{G_{i+1}}) \ge
\reg(S/J_{G_i}) \geq m_i + 1 = m_{i+1}+1$ as required.
\end{proof}

\begin{minipage}{\linewidth}
\begin{minipage}{0.40\linewidth}
\newrgbcolor{tttttt}{0.2 0.2 0.2}
\psset{xunit=1.1cm,yunit=1.2cm,algebraic=true,dotstyle=o,dotsize=3pt 0,linewidth=0.8pt,arrowsize=3pt 2,arrowinset=0.25}
\begin{pspicture*}(3.37,-4.9)(8.02,-2.14)
\psline(5,-2.5)(5.5,-2.5)
\psline(6,-2.5)(5.5,-2.5)
\psline(6,-2.5)(6.5,-2.5)
\psline(6.5,-2.5)(7,-2.5)
\psline(7,-2.5)(7.5,-2.5)
\psline(5.5,-2.5)(5.29,-2.77)
\psline(5.5,-2.5)(5.72,-2.84)
\psline(6,-2.5)(5.99,-2.87)
\psline(6.5,-2.5)(6.51,-2.9)
\psline(6.5,-2.5)(6.5,-2.2)
\psline(7,-2.5)(7,-2.2)
\psline(4.99,-3.31)(4.99,-3.73)
\psline(4.99,-3.73)(4.99,-4.28)
\psline(4.99,-4.28)(4.99,-4.56)
\psline(4.99,-3.73)(5.33,-3.54)
\psline(4.99,-3.73)(5.34,-3.88)
\psline(5.34,-3.88)(5.6,-3.88)
\psline(4.99,-3.73)(4.71,-3.73)
\psline(4.71,-3.73)(4.49,-3.73)
\psline(4.99,-3.31)(4.98,-2.99)
\psline(4.99,-4.56)(5.34,-4.56)
\psline(4.99,-4.28)(4.71,-4.29)
\psline(4.71,-4.29)(4.49,-4.29)
\psline(4.99,-4.28)(5.35,-4.28)
\psline(5.35,-4.28)(5.6,-4.28)
\psline(5.5,-2.5)(5.5,-3)
\psline(5.5,-3)(5.33,-3.54)
\psline[linecolor=tttttt](7.25,-2.75)(7.25,-3.16)
\psline[linecolor=tttttt](7.25,-3.16)(7.25,-3.64)
\psline[linecolor=tttttt](7.25,-3.64)(7.25,-4.14)
\psline[linecolor=tttttt](7.25,-4.14)(7.25,-4.56)
\psline[linecolor=tttttt](7.25,-3.64)(7.51,-3.55)
\psline[linecolor=tttttt](7.25,-3.64)(7.52,-3.73)
\psline[linecolor=tttttt](7.25,-4.14)(7.53,-4.09)
\psline[linecolor=tttttt](7.25,-4.14)(7.53,-4.22)
\psline(6.5,-2.5)(6.82,-2.84)
\psline(6.24,-2.89)(6.24,-3.29)
\psline[linecolor=tttttt](6.24,-3.29)(6.24,-3.63)
\psline[linecolor=tttttt](6.24,-3.63)(6.24,-3.95)
\psline[linecolor=tttttt](6.24,-3.95)(6.24,-4.21)
\psline[linecolor=tttttt](6.24,-4.21)(6.24,-4.57)
\psline[linecolor=tttttt](6.24,-4.57)(6.58,-4.56)
\psline(6.5,-2.5)(6.24,-2.89)
\psline(6.82,-2.84)(7.25,-3.16)
\psline(7.25,-2.75)(7.53,-2.75)
\psline(7.25,-3.16)(7.53,-3.16)
\begin{scriptsize}
\psdots[dotstyle=*](5,-2.5)
\rput[bl](5.01,-2.49){}
\psdots[dotstyle=*](5.5,-2.5)
\rput[bl](5.51,-2.49){}
\psdots[dotstyle=*](6,-2.5)
\rput[bl](6.01,-2.49){}
\psdots[dotstyle=*](6.5,-2.5)
\rput[bl](6.51,-2.49){}
\psdots[dotstyle=*](7,-2.5)
\rput[bl](7.01,-2.49){}
\psdots[dotstyle=*](7.5,-2.5)
\rput[bl](7.51,-2.49){}
\psdots[dotstyle=*](5.29,-2.77)
\rput[bl](5.3,-2.76){}
\psdots[dotstyle=*](5.72,-2.84)
\rput[bl](5.73,-2.83){}
\psdots[dotstyle=*](5.99,-2.87)
\rput[bl](6,-2.86){}
\psdots[dotstyle=*](6.51,-2.9)
\rput[bl](6.51,-2.88){}
\psdots[dotstyle=*](6.5,-2.2)
\rput[bl](6.5,-2.18){}
\psdots[dotstyle=*](7,-2.2)
\rput[bl](7.01,-2.19){}
\psdots[dotstyle=*](4.99,-3.31)
\rput[bl](5,-3.3){}
\psdots[dotstyle=*](4.99,-3.73)
\rput[bl](5,-3.72){}
\psdots[dotstyle=*](4.99,-4.28)
\rput[bl](5,-4.27){}
\psdots[dotstyle=*](4.99,-4.56)
\rput[bl](5,-4.55){}
\psdots[dotstyle=*](5.33,-3.54)
\rput[bl](5.34,-3.53){}
\psdots[dotstyle=*](5.34,-3.88)
\rput[bl](5.34,-3.87){}
\psdots[dotstyle=*](5.6,-3.88)
\rput[bl](5.61,-3.87){}
\psdots[dotstyle=*](4.71,-3.73)
\rput[bl](4.72,-3.72){}
\psdots[dotstyle=*](4.49,-3.73)
\rput[bl](4.5,-3.72){}
\psdots[dotstyle=*](4.98,-2.99)
\rput[bl](4.99,-2.97){}
\psdots[dotstyle=*](5.34,-4.56)
\rput[bl](5.35,-4.54){}
\psdots[dotstyle=*](4.71,-4.29)
\rput[bl](4.72,-4.27){}
\psdots[dotstyle=*](4.49,-4.29)
\rput[bl](4.49,-4.28){}
\psdots[dotstyle=*](5.35,-4.28)
\rput[bl](5.35,-4.27){}
\psdots[dotstyle=*](5.6,-4.28)
\rput[bl](5.61,-4.27){}
\psdots[dotsize=4pt 0,linecolor=red](5.5,-3)
\rput[bl](5.51,-2.97){}
\psdots[dotstyle=*,linecolor=tttttt](7.25,-2.75)
\rput[bl](7.26,-2.74){}
\psdots[dotstyle=*,linecolor=tttttt](7.25,-3.16)
\rput[bl](7.26,-3.15){}
\psdots[dotstyle=*,linecolor=tttttt](7.25,-3.64)
\rput[bl](7.26,-3.63){}
\psdots[dotstyle=*,linecolor=tttttt](7.25,-4.14)
\rput[bl](7.26,-4.13){}
\psdots[dotstyle=*,linecolor=tttttt](7.25,-4.56)
\rput[bl](7.26,-4.55){}
\psdots[dotstyle=*,linecolor=tttttt](7.51,-3.55)
\rput[bl](7.52,-3.54){}
\psdots[dotstyle=*,linecolor=tttttt](7.52,-3.73)
\rput[bl](7.53,-3.72){}
\psdots[dotstyle=*,linecolor=tttttt](7.53,-4.09)
\rput[bl](7.54,-4.08){}
\psdots[dotstyle=*,linecolor=tttttt](7.53,-4.22)
\rput[bl](7.54,-4.21){}
\psdots[dotsize=4pt 0,linecolor=red](6.82,-2.84)
\rput[bl](6.83,-2.82){}
\psdots[dotsize=4pt 0,linecolor=red](6.24,-2.89)
\rput[bl](6.25,-2.87){}
\psdots[dotstyle=*,linecolor=tttttt](6.24,-3.29)
\rput[bl](6.25,-3.28){}
\psdots[dotstyle=*,linecolor=tttttt](6.24,-3.63)
\rput[bl](6.25,-3.62){}
\psdots[dotstyle=*,linecolor=tttttt](6.24,-3.95)
\rput[bl](6.25,-3.94){}
\psdots[dotstyle=*,linecolor=tttttt](6.24,-4.21)
\rput[bl](6.25,-4.2){}
\psdots[dotstyle=*,linecolor=tttttt](6.24,-4.57)
\rput[bl](6.25,-4.55){}
\psdots[dotstyle=*,linecolor=tttttt](6.58,-4.56)
\rput[bl](6.59,-4.55){}
\psdots[dotstyle=*](7.53,-2.75)
\rput[bl](7.53,-2.73){}
\psdots[dotstyle=*](7.53,-3.16)
\rput[bl](7.53,-3.14){}
\end{scriptsize}
\end{pspicture*}
\end{minipage}
\begin{minipage}{0.5\linewidth}
Let $T$ be the tree given on the left. It follows from \cite[Theorem
4.1]{cdi14} and Theorem \ref{glue} that $\reg(S/J_T) = 26 = \iv(T)+1$,
while the longest path of $T$ has length $15$.
\end{minipage}
\end{minipage}

\vskip 2mm
\noindent
\begin{minipage}{\linewidth}
  \begin{minipage}{0.75\linewidth}
	It is interesting to note that the graph $\mathcal{J}$, which
we call {\em Jewel}, is the smallest tree for which
$\reg(S/J_\mathcal{J})>
\iv(\mathcal{J}) +1$. In fact, we can
make the gap between the regularity and the number of internal
vertices arbitrarily large by attaching  edge disjoint copies of
Jewel to leaves of any arbitrary tree. For example,
Figure~\ref{fig:doublejewel}, which is two copies of the jewel
superimposed together,  has regularity 12, much larger than the number
of internal vertices which is 7.
\end{minipage}
\begin{minipage}{0.2\linewidth}
\captionsetup[figure]{labelformat=empty}
\begin{figure}[H]
\begin{tikzpicture}[line cap=round,line join=round,>=triangle 45,x=0.6cm,y=0.6cm]
%\clip(-0.02,-5.00) rectangle (5.74,0.04);
\draw (2.82,-2.52)-- (2.82,-3.86);
\draw (2.82,-2.52)-- (3.86,-2.02);
\draw (2.82,-2.52)-- (1.78,-2.04);
\draw (3.86,-2.02)-- (4.82,-2.52);
\draw (3.86,-2.02)-- (3.84,-1.1);
\draw (2.82,-3.86)-- (3.82,-4.52);
\draw (2.82,-3.86)-- (1.82,-4.52);
\draw (1.78,-2.04)-- (0.82,-2.52);
\draw (1.78,-2.04)-- (1.76,-1.02);
\begin{scriptsize}
\fill [color=black] (2.82,-2.52) circle (2.5pt);
\fill [color=black] (3.86,-2.02) circle (2.5pt);
\fill [color=black] (2.82,-3.86) circle (2.5pt);
\fill [color=black] (1.78,-2.04) circle (2.5pt);
\fill [color=black] (4.82,-2.52) circle (2.5pt);
\fill [color=black] (3.84,-1.1) circle (2.5pt);
\fill [color=black] (3.82,-4.52) circle (2.5pt);
\fill [color=black] (1.82,-4.52) circle (2.5pt);
\fill [color=black] (0.82,-2.52) circle (2.5pt);
\fill [color=black] (1.76,-1.02) circle (2.5pt);
\end{scriptsize}
\end{tikzpicture}
\caption{$\mathcal{J}$: Jewel}
\end{figure}
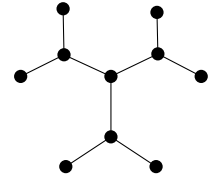
\end{minipage}
\end{minipage}
We now characterize trees which attain the minimal regularity.
\begin{theorem}\label{minreg}
A tree $T$ contains  Jewel as a subgraph if and only if $\reg(S/J_T)
 \geq \iv(T)+2$.
\end{theorem}

\begin{proof}
Suppose $T$ is a tree on $[n]$ containing \textit{Jewel}, $\J$, as a
subgraph. Note that there is a leaf ordering $v_1, \ldots, v_n$ such
that $V(\J) = \{v_1, \ldots, v_{10}\}$. Recall that $\reg(S/J_\J) = 6 =
\iv(\J) + 2$. Let $G_i$ denote the subgraph of $T$ on the vetex set $\{v_1,
\ldots, v_i\}, i \geq 10$. For each $i \geq 10$, $\reg(S/J_{G_{i+1}}) = \reg(S/J_{G_i}) + 1$ if
the neighbor of $v_{i+1}$ is a leaf in $G_i$ and $\reg(S/J_{G_{i+1}})
\geq \reg(S/J_{G_i})$ otherwise. Note also that  the neighbor of $v_{i+1}$ is a leaf in $G_i$ if
and only if $\iv(G_{i+1}) = \iv(G_i) + 1$. Since $\reg(S/J_{G_{10}}) = \iv(G_{10}) +
2$, we get that $\reg(S/J_{G_i}) \geq \iv(G_i) + 2$ for all $i \geq
10$. Hence the assertion follows. 

Conversely, suppose $\reg(S/J_T) \geq \iv(T)+2$. First assume that
$T$ does not have a vertex of degree $2$.  If $T$ is a caterpillar, then
by~\cite{cdi14}, $\reg(S/J_T) = \iv(T)+1$. Therefore, $T$ is not a caterpillar.
Then it contains the $Y$ graph ($K_{1,3}$ attached with a leaf at each of its
leaf vertices) as a subgraph~\cite[Theorem 2.2.19]{west}. Since $T$ does not have vertices of
degree $2$, each degree 2 vertex in the $Y$ graph must have one more neighbour
in $T$, which induces a Jewel in $T$.

Now assume that $T$ contains a vertex of degree 2.  Let $T'$ be a
minimal (with respect to the number of vertices) subgraph of $T$ so
that $\reg(S/J_{T'}) \ge \iv(T')+2$. If $T'$ has no vertex of degree
$2$, then $T'$ contains a jewel. Suppose $T'$ contains degree $2$
vertex, say $v$. Let $T_1$ and $T_2$ be the split of $T'$ at $\{v\}$.
Note that $\iv(T') = \iv(T_1) + \iv(T_2) + 1$. By Theorem \ref{glue},
$\reg(S/J_{T'}) = \reg(S/J_{T_1}) + \reg(S/J_{T_2}) \geq \iv(T_1) +
\iv(T_2) + 3$. Hence there exists $i \in \{1,2\}$ such that
$\reg(S/J_{T_i}) \geq \iv(T_i)+2$. Since $T_i$ is a subgraph of $T'$,
this contradicts the minimality of $T'$. Hence $T'$ does not contain a
degree $2$ vertex. Therefore, $T'$, and thus $T$ contains a jewel.
\end{proof}

Below, we obtain a class of trees which attain the lower bound.
For a lobster, a limb
of the form $K_{1,2}$ is called a \textit{pure limb}. A lobster with
only pure limbs and no whiskers is called a \textit{pure lobster}.

\begin{corollary}\label{purelobster}
If $G$ is a pure lobster with spine length $\ell$ and
$t$ pure limbs  attached to the spine, then $\reg(S/J_G) = \ell+t$.
\end{corollary}

\begin{proof} 
Since in a pure lobster, only vertices that have degree $3$ or more
are in the spine, it can not contain the Jewel graph as a subgraph.
Therefore by Theorem \ref{tree-regularity} and Theorem \ref{minreg},
we have $\reg(S/J_G) = \iv(G) + 1 = \ell + t$.
\end{proof}

\noindent
\begin{minipage}{\linewidth}
  \begin{minipage}{0.73\linewidth}
In Theorem \ref{glue}, it was shown that the regularity of the graph
obtained by gluing two graphs at a free vertex is sum of the
regularities of these two graphs. Naturally, one tends to ask what
happens to the regularity if we glue more graphs at a free vertex. We
partially answer this question in the next theorem.
Let $\mathcal{G}(m,n,w)$ be the family of graphs obtained by identifying
a free vertex each of $K_{1,r_1}, \ldots, K_{1,r_m}$, where $r_i \geq 3$,
$n$ cliques on at least three vertices and $w$ whiskers. 
\end{minipage}
\begin{minipage}{0.25\linewidth}
% Generated with LaTeXDraw 2.0.8
% Thu Mar 01 12:54:57 IST 2018
% \usepackage[usenames,dvipsnames]{pstricks}
% \usepackage{epsfig}
% \usepackage{pst-grad} % For gradients
% \usepackage{pst-plot} % For axes
  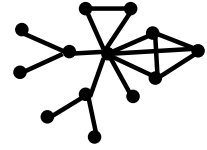
\begin{figure}[H]
\scalebox{1.5} % Change this value to rescale the drawing.
{
\begin{pspicture}(0,-0.65)(1.7193422,0.65)
\psdots[dotsize=0.12,dotangle=-89.59424](0.8392708,0.16964523)
\psdots[dotsize=0.12,dotangle=-89.59424](1.237986,0.35247347)
\psdots[dotsize=0.12,dotangle=-89.59424](1.2608182,-0.047374874)
\psdots[dotsize=0.12,dotangle=-89.59424](1.639109,0.1953102)
\psline[linewidth=0.04cm](1.2579855,0.3526151)(0.8392708,0.16964523)
\psline[linewidth=0.04cm](1.2808177,-0.04723324)(0.81927127,0.1695036)
\psline[linewidth=0.04cm](1.2579855,0.3526151)(1.6591085,0.19545184)
\psline[linewidth=0.04cm](1.6591085,0.19545184)(1.2808177,-0.04723324)
\psline[linewidth=0.04cm](1.6389674,0.21530971)(0.8792697,0.1699285)
\psline[linewidth=0.04cm](1.2579855,0.3526151)(1.2808177,-0.04723324)
\psline[linewidth=0.04cm](0.8591286,0.18978637)(0.45928028,0.16695413)
\psline[linewidth=0.04cm](0.45913863,0.18695363)(0.057873942,0.36411637)
\psline[linewidth=0.04cm](0.43928078,0.1668125)(0.040565543,-0.016015723)
\psdots[dotsize=0.12,dotangle=-89.59424](0.49913764,0.1872369)
\psdots[dotsize=0.12,dotangle=-89.59424](0.0777318,0.38425753)
\psdots[dotsize=0.12,dotangle=-89.59424](0.060423404,0.0041254126)
\psline[linewidth=0.04cm](0.82204473,0.21)(1.0620447,0.57)
\psline[linewidth=0.04cm](0.80204475,0.21)(0.6420447,0.57)
\psline[linewidth=0.04cm](0.6420447,0.57)(1.0620447,0.57)
\psline[linewidth=0.04cm](0.80204475,0.13)(0.68204474,-0.21)
\psline[linewidth=0.04cm](0.66204476,-0.19)(0.30204475,-0.41)
\psline[linewidth=0.04cm](0.66204476,-0.19)(0.74204475,-0.57)
\psdots[dotsize=0.12](0.6420447,-0.19)
\psdots[dotsize=0.12](0.30204475,-0.39)
\psdots[dotsize=0.12](0.72204477,-0.57)
\psline[linewidth=0.04cm](0.8420447,0.17)(1.0620447,-0.23)
\psdots[dotsize=0.12](1.0620447,-0.21)
\psdots[dotsize=0.12](0.6420447,0.57)
\psdots[dotsize=0.12](1.0420448,0.57)
\end{pspicture} 
}
\caption*{ {\small $G \in \mathcal{G}(2,2,1)$}}
\end{figure}
\end{minipage}
\end{minipage}
\begin{theorem}\label{gmnw}
If $G \in \mathcal{G}(m,n,w), ~ n \geq 2,$ then
$\reg(S/J_G) = n+2m.$
\end{theorem}

\begin{proof}
We prove by induction on $m$. Let $m = 0$. If $w = 0$, then the result
follows from Kiani-Madani. Suppose $w \geq 1$. Let $v$ denote the
vertex which is common to all the cliques and whiskers. Let $G'$ be
the clique on $V(G)$, $G''$ be the graph induced on $V(G) \setminus
\{v\}$ and $H$ be the graph $G'\setminus \{v\}$. Then $\reg(S/J_{G'}) =
\reg(S/J_H) = 1$. Since $G''$ is a collection of $n$ disjoint cliques
and $w$ isolated vertices, $\reg(S/J_{G''}) = n$. Therefore, the
assertion follows from the exact sequence:
\begin{eqnarray}\label{eh-seq}
0 \to S/J_G \to S/J_{G'} \oplus S/J_{G''} \to S/J_H \to 0.
\end{eqnarray}

Now assume that $m \geq 1$.
Let $\{u\}$ be a leaf vertex in $G$ and
$\{u,v\} \in
E(G)$. Let $G'$ be the graph obtained by adding necessary edges to $G$
so that $N[v]$ is a clique. Let $G''$ be the induced subgraph of $G$
on $V(G) \setminus \{v\}$. Let $H$ be the induced subgraph of $G'$ on
$V(G') \setminus \{v\}$. Therefore, we have the exact sequence
(\ref{eh-seq}).
Note that $G', H \in \mathcal{G}(m-1, n+1,w)$ and $G'' \in
\mathcal{G}(m-1, n,w)$. Therefore, by induction hypothesis
$\reg(S/J_{G'}) = \reg(S/J_H) = n + 2m - 1$ and $\reg(S/J_{G''}) = n +
2m - 2$. Therefore, from the short exact sequence, we get $\reg(S/J_G)
\leq n + 2m$. Note that $G$ contains $n$ vertex disjoint edges and $m$
vertex disjoint paths length $2$ as an induced subgraph. Therefore,
$\reg(S/J_G) \geq n + 2m$. 
\end{proof}

We now consider another subclass of block graphs and obtain
an improved upper bound on the regularity of binomial
edge ideals of those graphs.

\begin{theorem}\label{clique-whisk}
Let $G$ be the union $P\cup C_1 \cup \cdots \cup C_r \cup L_1 \cup
\cdots \cup L_t \cup e_1 \cup \cdots \cup e_w$ where $P$ is a longest
induced path on the vertices $\{v_0,\ldots,v_\ell\},$ $C_1, \ldots,
C_r$ are maximal cliques on at least three vertices, $L_1, \ldots,
L_t$ be limbs and $e_1, \ldots, e_w$ are whiskers such that $e_i \cap
\{v_0,v_\ell\} = \emptyset$ and 
\begin{enumerate}
  \item For all $A, B \in \{C_1, \ldots, C_r, L_1, \ldots, L_t, e_1,
	\ldots, e_w\}$ with $A \neq B$, 
	\begin{enumerate}
	  \item $A \cap B \subset P$ and $|A \cap B| \leq 1$; 
	  \item $|A \cap P| = 1$.
	\end{enumerate}
\end{enumerate}
Then 
$\reg(S/J_G) \leq \ell+2t + r.$
\end{theorem}

\begin{proof}
Without loss of generality, we assume that there are no degree $2$
vertices in $\{v_0,\ldots, v_\ell\}$. Further, we may assume that
there is a clique, say $C_i$, such that $C_i \cap P = \{v_\ell\}.$
If not, then $v_\ell$ is a leaf in $G$.  
Attach a clique $C'$ to $\{v_\ell\}$, to get a graph $G_1$ having
$\reg(S/J_{G_1}) = \reg(S/J_G) + 1$ (by gluing theorem).

We prove the assertion by induction on $t$. Let $t = 0$.
We argue this case by induction on $\ell$.
Suppose $\ell = 0$. Since $e_i \cap
\{v_0,v_\ell\} = \emptyset$, $w = 0$. Hence $G \in
\mathcal{G}(0,r,0)$. Hence the result follows from Theorem \ref{gmnw}.

Let $\ell = 1$. Suppose $v_0 \in C_1$. Let $G', ~G''$ and $H$ be the
graphs obtained by applying EHH-process on $G$ with respect to $v_0$.
Then $G''$ is a block graph with exactly $r$-cliques, $G'$ and $H$ are
block graphs with at most $r$-cliques. Therefore, it follows from the
exact sequence (\ref{ehh-es}) and \cite[Theorem 3.5]{mk-arxiv-13} that
\begin{eqnarray*}
  \reg(S/J_G) & \leq & \max\{\reg(S/J_{G'}), \reg(S/J_{G''}),
  \reg(S/J_H)+1\} \leq r+1.
\end{eqnarray*}

Now, suppose $\ell \geq 2$. 
Without loss of generality, assume that 
$C_1 \cap \{v_{\ell-1}, v_\ell\} \neq \emptyset$. 
Let $G',~ G''$  and $H$ be the graphs obtained by applying
EHH-process on $G$ with respect to $v_{\ell-1}$. 
Then $G'$ is the union of path $P'$
of length at most $\ell-1$ containing $\{v_0,\ldots, v_{\ell-2}\}$ and
cliques, $\{C_1', C_2,\ldots,C_r\}$ and a subset of whiskers
$\{e_1,\ldots, e_w\}$. Hence by induction, $\reg(S/J_{G'}) \leq
\ell-1+r$. Similarly $\reg(S/J_H) \leq \ell-1+r$ and $\reg(S/J_{G''})
\leq \ell-1+r$. Therefore, it follows from the exact sequence
(\ref{ehh-es}) that $\reg(S/J_G) \leq \ell+r$.

Let us assume that $t \geq 1$. Since there are no degree $2$ vertices
in $G$, each limb $L_i = K_{1,\mu_i}$ for some $\mu_i \geq 3$.
Consider the limb $L_t$. Suppose $L_t \cap P = \{v\}$ and $N_{L_t}(v)
= \{u\}$. Let $G', ~G''$ and $H$ be the
graphs obtained by applying EHH-process on $G$ with respect to $u$.
Then $G'$ and $H$ both have 
spines of length $\ell$, $r+1$ cliques, $t-1$ limbs  and $w$ whiskers.
Also, $G''$ has spine of length $\ell$, $r$ cliques, $t-1$ limbs and
$w$ whiskers. Therefore, by induction, $\reg(S/J_{G'}), \reg(S/J_H)
\leq \ell+(r+1)+2(t-1)$, and $\reg(S/J_{G''}) \leq \ell+r+2(t-1)$.
Hence, from the exact sequence (\ref{ehh-es}), it follows that $\reg(S/J_G)
\leq \ell+(r+1) + 2(t-1) + 1 = \ell+r+2t$ as required.
\end{proof}

Note that the graphs $G$ considered in Theorem \ref{clique-whisk} are
block graphs, and hence by Theorem 3.5 of \cite{mk-arxiv-13}, one has
$\reg(S/J_G) \leq c(G),$ where $c(G)$ is the number of cliques in $G$.
In the case where the $L_i = K_{1,r_i}$ for $r_i \geq 3$ and $w > 0$,
the bound given above is much smaller to the Madani-Kiani bound.

As a consequence of the above theorem, we generalize a result of
Chaudhry et al. to obtain an upper bound on the regularity of lobster
graphs.
\begin{corollary}\label{cor:lobsternowhisker}
If $G$ is a lobster with spine $P$ of length $\ell$ and $t$ limbs 
$P$, then $\reg(S/J_G) \leq \ell+2t$. 
\end{corollary}
\begin{proof}
  Take $r = 0$ in Theorem \ref{clique-whisk}.
\end{proof}

\vskip 2mm

\begin{example}
This is an example of a lobster which attains the upper
bound given in Theorem \ref{cor:lobsternowhisker}.
\vskip 2mm
\noindent
\begin{minipage}{\linewidth}
  \centering
\begin{minipage}{0.4\linewidth}
%\captionsetup[figure]{labelformat=empty}
\begin{figure}[H]
% Generated with LaTeXDraw 2.0.8
% Tue Nov 10 12:14:41 IST 2015
% \usepackage[usenames,dvipsnames]{pstricks}
% \usepackage{epsfig}
% \usepackage{pst-grad} % For gradients
% \usepackage{pst-plot} % For axes
%\scalebox{1} % Change this value to rescale the drawing.
%{
\begin{pspicture}(0,-2.0060937)(4.1846876,2.0060937)
\psdots[dotsize=0.2](2.0803125,-0.00109375)
\psdots[dotsize=0.2](2.0603125,0.9789063)
\psdots[dotsize=0.2](2.0603125,-1.0210937)
\psdots[dotsize=0.2](1.0803125,-0.62109375)
\psdots[dotsize=0.2](1.0803125,0.57890624)
\psdots[dotsize=0.2](3.0603125,0.61890626)
\psdots[dotsize=0.2](3.0603125,-0.64109373)
\psline[linewidth=0.04cm](2.0603125,0.99890625)(2.0803125,-1.0010937)
\psline[linewidth=0.04cm](2.0803125,0.01890625)(3.0603125,0.61890626)
\psline[linewidth=0.04cm](1.0803125,-0.5810937)(2.0603125,-0.00109375)
\psline[linewidth=0.04cm](1.0803125,0.57890624)(3.0603125,-0.64109373)
\psdots[dotsize=0.2](1.6803125,-1.6010938)
\psdots[dotsize=0.2](2.4603126,-1.6010938)
\psline[linewidth=0.04cm](2.0403125,-0.98109376)(1.6803125,-1.6010938)
\psline[linewidth=0.04cm](2.0603125,-1.0210937)(2.4403124,-1.6010938)
\psdots[dotsize=0.2](1.6603125,1.5789063)
\psdots[dotsize=0.2](2.4603126,1.5989063)
\psline[linewidth=0.04cm](1.6603125,1.5589062)(2.0403125,0.99890625)
\psline[linewidth=0.04cm](2.0403125,0.99890625)(2.4603126,1.6189063)
\psline[linewidth=0.04cm](1.0603125,0.6589062)(0.8403125,1.1789062)
\psline[linewidth=0.04cm](1.0403125,0.57890624)(0.4603125,0.5989063)
\psdots[dotsize=0.2](0.8403125,1.1789062)
\psdots[dotsize=0.2](0.4603125,0.61890626)
\psline[linewidth=0.04cm](3.3003125,-1.2010938)(3.0803125,-0.68109375)
\psline[linewidth=0.04cm](3.6803124,-0.64109373)(3.1003125,-0.62109375)
\psdots[dotsize=0.2](3.6603124,-0.62109375)
\psdots[dotsize=0.2](3.2803125,-1.1810937)
\psline[linewidth=0.04cm](3.1003125,0.7189062)(3.2603126,1.1589062)
\psline[linewidth=0.04cm](3.1203125,0.5989063)(3.6603124,0.5989063)
\psline[linewidth=0.04cm](0.9003125,-1.0810938)(1.0603125,-0.64109373)
\psline[linewidth=0.04cm](1.0403125,-0.62109375)(0.5203125,-0.62109375)
\psdots[dotsize=0.2](0.8803125,-1.1010938)
\psdots[dotsize=0.2](0.5003125,-0.60109377)
\psdots[dotsize=0.2](3.6403124,0.5989063)
\psdots[dotsize=0.2](3.2603126,1.1389062)
\end{pspicture} 
%}
\caption{$G$}
\label{fig:doublejewel}
\end{figure}
\end{minipage}
\begin{minipage}{0.5\linewidth}
This graph $G$ has many different longest induced paths. Fixing any
one of them, one can see that $G$ has spine length $\ell=4, ~t=4$
limbs attached to the spine and $2$ whiskers. It can be shown that
$$\reg(S/J_G) = 12 = \ell+2t.$$ 
\end{minipage}
\end{minipage}
\end{example}

\begin{corollary}
Let $G$ be a lobster with spine $P$ of length $\ell$, $t$ limbs and
$r$ whiskers. Then $\ell + t \leq \reg(S/J_G) \leq \ell+2t$.
\end{corollary}
\begin{proof}
The upper bound is proved in Corollary \ref{cor:lobsternowhisker}. To prove the
lower bound, note that $G$ has a subgraph $G'$ with spine $P$, $t$
pure limbs and without any whiskers as an induced subgraph. By
Corollary
\ref{purelobster}, $\reg(S/J_{G'}) = \ell+t$ as required.
\end{proof}
\vskip 2mm
\noindent
%\subsection{Discussion}
From Theorem \ref{minreg}, it is clear that the presence of Jewel
graph as a subgraph plays crucial role in determining the regularity
of a tree. It can be seen that $\reg(S/J_T) \geq \iv(T) + j$, where
$T$ contains $j$ vertex disjoint copies of the Jewel graph. We believe
that understanding the regularity behaviour of collection of Jewels
that share vertices and/or edges can lead to a precise estimation of
regularity of trees.

Recently, Herzog and Rinaldo has generalized Theorem
\ref{tree-regularity} to certain block graphs.

\vskip 2mm
\noindent
\textbf{Acknowledgment:} We thank Nathann Cohen for
setting up SAGE and giving us initial lessons in programming. 
%We also
%thank Tony Puthenpurakkal for some discussions regarding Remark
%\ref{bettitable} and Corollary \ref{regularity-edge-leaf}. 
We have
extensively used computer algebra software SAGE, \cite{sage}, and
Macaulay2, \cite{M2}, for our computations. Thanks are also due to Jinu
Mary Jameson who provided us with a lot of computational materials.
This research is partly funded by I.C.S.R. Exploratory Project
Grant, MAT/1415/831/RFER/AVJA, of I.I.T. Madras and Extra Mural
Research project by Sciences and Engineering Research Board,
Government of India Grant, EMR/2016/001883. We would also
like to thank the anonymous referee for a meticulous reading and
making several suggestions which improved the exposition.

\bibliographystyle{plain}
\bibliography{binombib}
\end{document}